\newcommand{\dd}{\mathrm{d}}
\newcommand{\E}{\mathbb{E}}
\newcommand{\R}{\mathbb{R}}
\newcommand{\C}{\mathbb{C}} 
\newcommand{\Z}{\mathbb{Z}}
\newcommand{\e}{\varepsilon}
\newcommand{\sL}{\mathscr{L}}
\newcommand{\p}[1]{\mathbb{P}\left( #1 \right)}
\newcommand{\scal}[2]{\!\left\langle #1, #2 \right\rangle\!}
\DeclareMathOperator{\Var}{Var}
\DeclareMathOperator{\sgn}{sgn}
\newtheorem{theorem}{Theorem}
\newtheorem{lemma}[theorem]{Lemma}
\newtheorem{corollary}[theorem]{Corollary}
\theoremstyle{remark}
\newtheorem{remark}[theorem]{Remark}
\theoremstyle{definition}
\title{\vspace{-3em}
Khinchin-type inequalities via Hadamard's factorisation
}
\author{Alex Havrilla\thanks{Georgia Institute of Technology; Atlanta, GA 30332. Research supported in part by NSF grant DMS-1955175.}, \ Piotr Nayar\thanks{\linespread{1.0} University of Warsaw, Banacha 2, 02-097 Warszawa, Poland. Research supported in part by the National Science Centre, Poland, grant 2018/31/D/ST1/01355.}
\ and 
Tomasz Tkocz\thanks{\linespread{1.0} Carnegie Mellon University; Pittsburgh, PA 15213, USA. Email: ttkocz@math.cmu.edu. Research supported in part by NSF grant DMS-1955175.}
\date{7th October 2021}
}
\begin{document}

\maketitle

\begin{abstract}
We prove Khinchin-type inequalities with sharp constants for type L random variables and all even moments. Our main tool is Hadamard's factorisation theorem from complex analysis, combined with Newton's inequalities for elementary symmetric functions. Besides the case of independent summands, we also treat ferromagnetic dependencies in a nonnegative external magnetic field (thanks to Newman's generalisation of the Lee-Yang theorem). Lastly, we compare the notions of type L, ultra sub-Gaussianity (introduced by Nayar and Oleszkiewicz) and strong log-concavity (introduced by Gurvits), with the latter two being equivalent.
\end{abstract}

\bigskip

\begin{footnotesize}
\noindent {\em 2010 Mathematics Subject Classification.} Primary 60E15; Secondary 30D15, 26D15.

\noindent {\em Key words. Khinchin inequality, moment comparison, log-concave sequence, Laguerre-P\'olya class} 
\end{footnotesize}

\bigskip

\section{Introduction and results}

Motivated by Ising models of Lee-Yang type, Newman in \cite{N1} made the following definition: a (real-valued) random variable $X$ is of \textbf{type $\mathscr{L}$}, if (i) for some positive constants $C$ and $C'$, $|\E\exp(zX)| \leq C\exp(C'|z|^2)$ for all $z \in \C$ and (ii) $\E\exp(zX)$ is even with only pure imaginary zeros. Note that (ii) implies that $X$ is symmetric, that is $-X$ has the same distribution as $X$. Perhaps the simplest nontrivial example is a Rademacher random variable (a symmetric random sign) taking the values $\pm 1$ with probability $\frac{1}{2}$.
Newman proved that if $X$ is of type $\mathscr{L}$, then for an even integer $p \geq 2$,
\[
\|X\|_p \leq \|G\|_p\|X\|_2,
\]
where $G$ is a standard Gaussian random variable (see \cite{N2} or Theorem 5 in \cite{N1}). As usual, $\|Y\|_p = (\E |Y|^p)^{1/p}$ denotes the $p$-norm of a random variable $Y$.

The goal of this short note is to extend this result to a comparison inequality for moments of arbitrary even orders. As explained later, the assumption of evenness in (ii) may be easily dropped (it is just a matter of cosmetic shifting). Thus we shall say that a random variable $X$ is of \textbf{type $\mathscr{L}'$} if it satisfies (i) and (ii') $\E\exp(zX)$ has only pure imaginary zeros.
Our main result is the following theorem.

\begin{theorem}\label{thm:mom-comp}
Let $X$ be a random variable of type $\mathscr{L}'$. Then for every even integers $2 \leq p \leq q$, we have
\begin{equation}\label{eq:mom-comp}
\|X\|_q \leq \frac{\|G\|_q}{\|G\|_p}\|X\|_p,
\end{equation}
where $G$ is a standard Gaussian random variable.
\end{theorem}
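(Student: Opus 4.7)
The plan is to combine Hadamard's factorisation theorem with Newton's inequality for elementary symmetric polynomials, together with a short log-concavity lemma. The key point is that the moment generating function, rewritten in the variable $w = z^2$, lies in the Laguerre--P\'olya class, and so its Taylor coefficients inherit the log-concavity enjoyed by the coefficients of polynomials with real zeros; this log-concavity is exactly what is needed to conclude that $\|X\|_{2m}/\|G\|_{2m}$ is non-increasing in $m$.

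First I would reduce to the even case. By Hadamard applied to $f(z) := \mathbb{E}e^{zX}$, which is entire of order at most $2$ by (i) and has only purely imaginary zeros by (ii'), pairing the conjugate zeros $\pm i\lambda_k$ gives a factorisation of the form $f(z) = \exp(az^2 + bz)\prod_k\!\bigl(1+z^2/\lambda_k^2\bigr)$. Passing to $Y := X - b$ removes the linear term from the exponent, so that $Y$ is of type $\mathscr{L}$; this is the ``cosmetic shift'' promised in the introduction, and one verifies that the desired inequality for $X$ follows from the same inequality for $Y$. So henceforth assume $f$ is even and set $g(w) := f(\sqrt{w})$. Then $g$ is a well-defined entire function of order at most $1$ whose zeros $-\lambda_k^2$ are non-positive reals, i.e., $g$ belongs to the Laguerre--P\'olya class. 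Accordingly, one can approximate $g$ uniformly on compact sets (and hence coefficient-wise) by polynomials $g_N(w) = \sum_m c_m^{(N)} w^m$ of degree $n_N \to \infty$, each having only non-positive real zeros.

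For each $g_N$, Newton's inequality states that $\bigl(c_m^{(N)}/\binom{n_N}{m}\bigr)^{\!2} \geq \bigl(c_{m-1}^{(N)}/\binom{n_N}{m-1}\bigr)\bigl(c_{m+1}^{(N)}/\binom{n_N}{m+1}\bigr)$. Using $\binom{n_N}{m}^2/\bigl(\binom{n_N}{m-1}\binom{n_N}{m+1}\bigr) \to (m+1)/m$ as $n_N \to \infty$, passage to the limit yields $c_m^2 \geq \tfrac{m+1}{m}\, c_{m-1}c_{m+1}$ where $c_m := [w^m]g(w)$; equivalently, $b_m := m!\, c_m$ is log-concave in $m$. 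Since $c_m = [z^{2m}]f(z) = \mathbb{E}X^{2m}/(2m)!$ and $\mathbb{E}G^{2m} = (2m)!/(2^m m!)$, the sequence $a_m := \mathbb{E}X^{2m}/\mathbb{E}G^{2m}$ equals $2^m b_m$, and a direct check shows that it is also log-concave, with $a_0 = 1$.

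Finally, log-concavity of $a_m$ with $a_0 = 1$ implies that $a_m^{1/m}$ is non-increasing: the ratios $r_m := a_m/a_{m-1}$ are non-increasing by log-concavity, and $a_m^{1/m} = (r_1 r_2 \cdots r_m)^{1/m}$ is the geometric mean of a non-increasing sequence, hence itself non-increasing. Since $\|X\|_{2m}/\|G\|_{2m} = a_m^{1/(2m)}$, this gives $\|X\|_q/\|G\|_q \leq \|X\|_p/\|G\|_p$ for every pair of even integers $p \leq q$, which is the stated inequality. I expect the main obstacles to be (i) making the reduction from type $\mathscr{L}'$ to type $\mathscr{L}$ fully rigorous (the effect of the shift on the $p$-norms is not entirely transparent), and (ii) justifying the coefficient-wise passage to the limit in Newton's inequality from the polynomial approximants $g_N$ to the Laguerre--P\'olya function $g$; both are of a technical nature but standard in the Laguerre--P\'olya theory.
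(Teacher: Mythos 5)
Your main argument for the even (type $\sL$) case is correct and takes a genuinely different route from the paper. You treat $g(w) := \E\exp(\sqrt{w}X) = e^{aw}\prod_k(1+w/\lambda_k^2)$ as a single Laguerre--P\'olya function, approximate it locally uniformly by polynomials with only non-positive real zeros, apply Newton's inequalities to the approximants, and pass to the limit to get $c_m^2 \geq \tfrac{m+1}{m}c_{m-1}c_{m+1}$, i.e. log-concavity of $a_m = \E X^{2m}/\E G^{2m} = 2^m m!\, c_m$. The paper instead peels off the Gaussian factor $e^{aw}$, proves log-concavity of $(\sigma_k k!)$ directly from Newton's inequalities applied to the elementary symmetric functions $\sigma_k(\alpha_1,\dots,\alpha_n)$, and then reincorporates the Gaussian factor via Walkup's theorem (binomial convolution preserves log-concavity). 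Your route avoids Walkup at the cost of relying on the standard but non-trivial fact that Laguerre--P\'olya functions are locally uniform limits of real-rooted polynomials, with consequent coefficient convergence; the paper's route is slightly more elementary and self-contained. Both are sound, and the final passage from log-concavity with $a_0 = 1$ to monotonicity of $a_m^{1/m}$ is identical and correct.

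The genuine gap is in your reduction from type $\sL'$ to type $\sL$. You pass to $Y := X - b$ and assert that ``one verifies that the desired inequality for $X$ follows from the same inequality for $Y$,'' later flagging this as a technical obstacle. It is more than a technicality: the inequality $\|Y\|_q \leq \frac{\|G\|_q}{\|G\|_p}\|Y\|_p$ does \emph{not} imply the corresponding inequality for $X = Y + b$, since $\|Y+b\|_p \neq \|Y\|_p$ and $X$ is not symmetric. The missing device is a Rademacher randomisation: taking an independent symmetric sign $\varepsilon$ and using the symmetry of $Y$ (which, being type $\sL$, is symmetric), one has $\E|X|^p = \E|Y+b|^p = \E|Y + b\varepsilon|^p$ for every $p$, so $X$ has the same absolute moments as $Y + b\varepsilon$. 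The latter is a sum of two independent type $\sL$ random variables, hence type $\sL$, and the theorem proved for the type $\sL$ case applies to it, giving the claim for $X$. Without this step, your reduction does not go through; applying the inequality directly to $Y$ proves the wrong statement.
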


The usefulness of this result in Khinchin inequalities for sums of independent random variables comes from the following evident fact.

\begin{remark}\label{rem:typeL-sums}
Finite sums of independent type $\mathscr{L}$ (resp. $\sL'$) random variables are of type $\mathscr{L}$ (resp. $\sL'$).
\end{remark}

Thus the above theorem immediately gives classical Khinchin inequalities with sharp constants for even moments, the result due to Nayar and Oleszkiewicz from \cite{NO}. Furthermore, it automatically extends to Hilbert space coefficients by isometrical embeddings into $L_p$ spaces based on standard Gaussians (Orlicz-Szarek's argument, see Remark 3 in \cite{Sz}).   

\begin{corollary}\label{cor:Hilbert}
Let $(H,\|\cdot\|)$ be a separable (real or complex) Hilbert space. If $X_1, \ldots, X_n$ are independent type $\sL'$ random variables, then for every vectors $v_1, \ldots, v_n$ in $H$, the sum $X = \sum_{j=1}^n X_jv_j$ satisfies \eqref{eq:mom-comp} for all positive even integers $p \leq q$, where we denote $\|X\|_p = (\E\|X\|^p)^{1/p}$.
\end{corollary}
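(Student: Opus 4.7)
My plan is to deploy the standard Orlicz--Szarek Gaussian embedding argument, reducing the Hilbert-valued inequality to the scalar one fibred over an auxiliary Gaussian. First I would simplify the setup: since $v_1,\dots,v_n$ span a finite-dimensional subspace we may assume $H$ is finite-dimensional, and because a complex Hilbert space is canonically a real Hilbert space of doubled real dimension with the same norm, the complex case reduces to the real one. Next I would fix an independent standard Gaussian vector $g$ on $H$ and recall that
\[
\|v\|^r \;=\; \|G\|_r^{-r}\,\E_g |\langle g,v\rangle|^r, \qquad v\in H,\ r>0,
\]
so that the Hilbert norm is realised as an $L^r$-norm of a Gaussian linear functional.

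Using this identity with $r=q$ together with Fubini, I would rewrite
\[
\|G\|_q^q\,\|X\|_q^q \;=\; \E_g\,\E_X\Bigl|\sum_j X_j\langle g,v_j\rangle\Bigr|^q.
\]
For each fixed $g$ the coefficients $\langle g,v_j\rangle$ are deterministic real numbers, so by Remark~\ref{rem:typeL-sums} (combined with the trivial stability of type $\sL'$ under real scaling) the inner sum is of type $\sL'$ as a random variable in the $X_j$'s. Theorem~\ref{thm:mom-comp} applied conditionally on $g$ then gives, with $F(g):=\E_X|\langle g,X\rangle|^p$,
\[
\E_X\Bigl|\sum_j X_j\langle g,v_j\rangle\Bigr|^q \;\le\; \Bigl(\tfrac{\|G\|_q}{\|G\|_p}\Bigr)^q F(g)^{q/p},
\]
and integrating in $g$ collapses this to $\|X\|_q^q \le \|G\|_p^{-q}\,\E_g F(g)^{q/p}$.

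The main remaining step---and the only ingredient beyond Theorem~\ref{thm:mom-comp}---is Minkowski's integral inequality with exponent $q/p\ge 1$, which I would use to swap the $L^{q/p}(g)$-norm with the $L^1(X)$-average defining $F$:
\[
\bigl(\E_g F(g)^{q/p}\bigr)^{p/q} \;\le\; \E_X\bigl(\E_g|\langle g,X\rangle|^q\bigr)^{p/q}.
\]
Since $\langle g,X\rangle\mid X \sim N(0,\|X\|^2)$, the inner expectation is exactly $\|G\|_q^q\|X\|^q$, so the last display becomes $\E_g F^{q/p}\le \|G\|_q^q\,\|X\|_p^q$. Combining with the previous bound yields $\|X\|_q\le (\|G\|_q/\|G\|_p)\|X\|_p$. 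I expect the only point requiring care to be the bookkeeping of powers of $\|G\|_p$ and $\|G\|_q$ across the two distinct uses of the Gaussian identity---once \emph{exactly}, for the genuine Gaussian $\langle g,X\rangle\mid X$, and once \emph{with slack}, through Theorem~\ref{thm:mom-comp}---so that the two occurrences of $\|G\|_q^q$ cancel as they should.
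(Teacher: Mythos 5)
Your proof is correct and follows essentially the same route as the paper's: the Orlicz--Szarek Gaussian embedding, a conditional application of Theorem~\ref{thm:mom-comp}, and Minkowski's integral inequality with exponent $q/p\geq 1$. The only cosmetic difference is that you dispatch the complex case up front by viewing $H$ as a real Hilbert space of doubled real dimension, whereas the paper instead modifies the embedding map by splitting into real and imaginary parts with a doubled family of Gaussians; both reductions are equivalent.
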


Thanks to Newman's generalisation of the Lee-Yang theorem and Griffiths' method of a ``ghost'' spin (see \cite{G, N0, N3}), we moreover obtain \eqref{eq:mom-comp} when $X$ is a positive linear combination of random variables with ferromagnetic dependencies $J=(J_{jk})$ in a nonnegative external magnetic field $h=(h_j)$.

\begin{corollary}\label{cor:ferr}
Let $\mu_1, \ldots, \mu_n$ be Borel probability measures on $\R$, each one of type $\sL$. Suppose that $(X_1,\dots,X_n)$ is a random vector in $\R^n$ whose law $\rho$ on $\R^n$ is of the form
\begin{equation}\label{eq:ferr-density}
\dd\rho(x_1,\ldots,x_n) = Z^{-1}\exp\left(\sum_{j=1}^n h_jx_j+\sum_{j,k=1}^n J_{jk}x_jx_k\right)\dd\mu_1(x_1)\dots\dd\mu_n(x_n)
\end{equation}
with $h_j \geq 0$, $J_{jk} \geq 0$ for all $j,k \leq n$, where $Z$ is the normalising constant. Then for every nonnegative $a_1, \ldots, a_n$, the sum $X = \sum_{j=1}^n a_jX_j$ satisfies \eqref{eq:mom-comp} for all positive even integers $p \leq q$.
\end{corollary}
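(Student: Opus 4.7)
The plan is to deduce the moment comparison for $X$ from Theorem \ref{thm:mom-comp} via Griffiths' ghost spin trick. Theorem \ref{thm:mom-comp} cannot be applied to $X$ directly: already when $n=1$, $\mu_1$ is Rademacher, $h_1>0$, $a_1>0$ and there are no interactions, the Laplace transform $\E\exp(zX)=pe^{a_1z}+(1-p)e^{-a_1z}$ with $p=e^{h_1}/(e^{h_1}+e^{-h_1})$ has zeros on the vertical line $\mathrm{Re}(z)=-h_1/a_1<0$, so $X$ is not of type $\sL'$. We therefore manufacture from $X$ a symmetric auxiliary variable $Y$ of type $\sL'$ with $\|Y\|_r=\|X\|_r$ for every $r>0$, and apply Theorem \ref{thm:mom-comp} to $Y$.

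Introduce a Rademacher law $\mu_0=\frac{1}{2}(\delta_1+\delta_{-1})$ (which is of type $\sL$) and form the enlarged measure on $\R^{n+1}$
\[
\dd\rho'(x_0, x_1, \ldots, x_n) \propto \exp\!\left(\sum_{j=1}^n h_j x_0 x_j + \sum_{j,k=1}^n J_{jk} x_j x_k\right) \dd\mu_0(x_0)\prod_{j=1}^n \dd\mu_j(x_j),
\]
in which the external field $h$ of the original model has been absorbed into nonnegative couplings $J'_{0j}=h_j$ between the ghost spin $X_0$ and the other spins, so that $\rho'$ carries no external field. By the symmetry of each $\mu_j$, the measure $\rho'$ is invariant under the joint reflection $(x_0, x_j)\mapsto(-x_0,-x_j)$, whence $\p{X_0=\pm 1}=\frac{1}{2}$; conditioning on $X_0=1$ recovers $\rho$, and conditioning on $X_0=-1$ yields $\rho$ reflected through the origin. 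Consequently $Y:=\sum_{j=1}^n a_j X_j$ under $\rho'$ has the symmetric distribution $\frac{1}{2}(\mathcal{L}_\rho(X)+\mathcal{L}_\rho(-X))$; in particular $\|Y\|_r=\|X\|_r$ for every $r>0$.

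The main step is to verify that $Y$ is of type $\sL'$ under $\rho'$. Since $\rho'$ is a ferromagnet with type $\sL$ single-spin measures and zero external field, Newman's extension of the Lee-Yang theorem \cite{N0, N3} gives that the joint Laplace transform $\Psi(w_0,\ldots,w_n):=\E_{\rho'}\exp\!\left(\sum_{j=0}^n w_j X_j\right)$ has no zeros on $\{\mathrm{Re}(w_j)>0\text{ for all }j\}$. Setting $w_0=0$, $w_j=za_j$ and passing to the boundary via Hurwitz's theorem shows that $\psi(z):=\E_{\rho'}\exp(zY)\neq 0$ for $\mathrm{Re}(z)>0$ (one may assume some $a_j>0$, else the corollary is trivial). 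The evenness $\psi(-z)=\psi(z)$ that follows from the symmetry of $Y$ extends this to $\mathrm{Re}(z)<0$, so every zero of $\psi$ is pure imaginary, giving condition (ii'). The order-two growth bound (i) $|\psi(z)|\leq Ce^{C'|z|^2}$ follows from sub-Gaussian tails of $Y$ under $\rho'$ (standard for ferromagnets with type $\sL$ marginals, see \cite{N3}) combined with the Cauchy--Schwarz estimate $|\psi(z)|^2\leq\psi(2\mathrm{Re}\,z)$. Theorem \ref{thm:mom-comp} then gives $\|Y\|_q\leq\frac{\|G\|_q}{\|G\|_p}\|Y\|_p$ for even $2\leq p\leq q$, and the identity $\|X\|_r=\|Y\|_r$ transfers \eqref{eq:mom-comp} to $X$. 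The delicate point is the extension of Newman's zero-free region to the boundary of the polyhalf-plane (so as to allow $w_0=0$ and any $w_j=0$ arising from $a_j=0$), which Hurwitz's theorem handles routinely via locally uniform convergence of the perturbed families.
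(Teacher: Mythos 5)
Your proof is correct and uses essentially the same approach as the paper: Griffiths' ghost-spin trick to absorb the external field into couplings with an auxiliary Rademacher spin, followed by Newman's Lee--Yang-type result to conclude that nonnegative linear combinations of the enlarged (field-free) ferromagnet are of type $\sL$, and finally the observation that the symmetrised variable has the same absolute moments as $X$. The only difference is one of packaging: the paper cites Proposition 6 of \cite{N3} for the ghost-spin construction and Theorem 2 of \cite{N2} for the type $\sL$ conclusion, whereas you re-derive those two ingredients from Newman's zero-free region via Hurwitz's theorem and symmetry.
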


For a certain subclass of the type $\sL$ random variables (cf. the section below devoted to examples), we are also able to derive sharp moment comparison between the second and $p$-th moment, $p \geq 3$.

\begin{theorem}\label{thm:p>3}
Let $X$ be type $\sL$ random variable with characteristic function of the form $\phi_X(t) = e^{-a t^2/2}\prod_{j=1}^\infty(1-b_jt^2)$ with $a > 0$, $b_j \geq 0$, $\sum b_j \leq a$. Let $\sigma = \sqrt{\Var(X)}$. Then for every $p \geq 3$, 
\[
\E|\sigma Z_1|^p \leq \E|X|^p \leq \E|\sigma Z_0|^p,\]
where $Z_0$ is a standard Gaussian random variable and $Z_1$ is a random variable with density $(2\pi)^{-1/2}x^2e^{-x^2/2}$.
\end{theorem}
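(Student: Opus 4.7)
The plan is to reduce $X$ to a sum of independent tractable pieces and then prove the bounds inductively. A quick computation gives the characteristic function of $Z_1$: since $\int e^{itx}x^2e^{-x^2/2}\,dx = -\frac{d^2}{dt^2}\int e^{itx}e^{-x^2/2}\,dx$, we find $\phi_{Z_1}(t) = (1-t^2)e^{-t^2/2}$, and so $\sqrt{b_j}\,Z_1^{(j)}$ has characteristic function $(1-b_jt^2)e^{-b_jt^2/2}$. Multiplying these across $j$ with the Gaussian factor $e^{-(a-\sum b_j)t^2/2}$ recovers $\phi_X$, yielding the probabilistic representation
\[
X \stackrel{d}{=} \sqrt{a-\textstyle\sum_j b_j}\,Z_0 + \sum_j \sqrt{b_j}\,Z_1^{(j)}
\]
with $Z_0, Z_1^{(1)}, Z_1^{(2)}, \ldots$ independent. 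From this, $\sigma^2 = (a - \sum b_j) + 3\sum b_j = a + 2\sum b_j$, and the extremals within the class (modulo rescaling) are $X = \sigma Z_0$, where the upper bound is tight, and (as $a\downarrow 0$ with a single $b_j = \sigma^2/3$) $X = (\sigma/\sqrt{3})Z_1$, where the lower bound is tight — so the factor $\sqrt 3$ in the statement enters naturally through $\mathrm{Var}(Z_1)=3$.

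I would then fix $p \geq 3$ and induct on the number of factors, writing $X_n$ for the truncation retaining only the first $n$ factors of the product and passing to $n\to\infty$ by dominated convergence at the end. On the characteristic-function side, appending $(1-b_nt^2)$ corresponds, on the density side, to applying the operator $1+b_n\partial_x^2$, so $f_{X_n} = (1+b_n\partial_x^2)f_{X_{n-1}}$. Integrating $|x|^p$ against this and integrating by parts twice — valid since $(|x|^p)'' = p(p-1)|x|^{p-2}$ is locally integrable for $p\geq 2$ and the boundary terms vanish by the Gaussian decay of $f_{X_{n-1}}$ and its derivatives — yields the recurrence
\[
\E|X_n|^p = \E|X_{n-1}|^p + b_n\,p(p-1)\,\E|X_{n-1}|^{p-2}.
\]
The base case $X_0 = \sqrt{a}\,Z_0$ gives equality in the upper bound, while the lower bound reduces to $3^{p/2}\geq p+1$ for $p\geq 3$ (using $\E|Z_1|^p=(p+1)\E|Z_0|^p$), which is elementary. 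For the inductive step, combining the recurrence with the Gaussian identity $\E|Z_0|^p = (p-1)\E|Z_0|^{p-2}$ (for $p>1$) reduces each bound to a Bernoulli-type inequality of the form $(1+2b/\sigma_{n-1}^2)^{p/2}\geq 1 + pb/\sigma_{n-1}^2$ (valid for $p\geq 2$).

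The main technical obstacle I foresee is the induction in the range $p\in[3,4)$, where $p-2\in[1,2)$ and the inductive hypothesis for the upper bound on $\E|X_{n-1}|^{p-2}$ does not align with the Gaussian scale: for sub-Gaussian distributions, absolute moments of order below $2$ typically exceed the Gaussian counterparts, so the naive bound $\E|X_{n-1}|^{p-2}\leq \sigma_{n-1}^{p-2}\E|Z_0|^{p-2}$ is false. To handle this range I would either use the Jensen bound $\E|X_{n-1}|^{p-2}\leq \sigma_{n-1}^{p-2}$ (valid since $(p-2)/2\leq 1$) combined with a sharper Bernoulli-type estimate, or switch to the Fourier integral representation $\E|Y|^p = c_p\int_0^\infty t^{-p-1}[\phi_Y(t) - 1 + \sigma^2 t^2/2]\,dt$ (with $c_p>0$ for $p\in(2,4)$) and compare the integrals for $X$ and $\sigma Z_0$ directly; the latter route also matches cleanly with the Hadamard factorisation philosophy underpinning the paper. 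A symmetric argument handles the lower bound in the same range.
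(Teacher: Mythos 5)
Your decomposition $X \stackrel{d}{=} \sqrt{a-\sum b_j}\,Z_0 + \sum_j \sqrt{b_j}\,Z_1^{(j)}$ and the two-step integration by parts giving the recurrence $\E|X_n|^p = \E|X_{n-1}|^p + b_n\,p(p-1)\,\E|X_{n-1}|^{p-2}$ are both correct, and this is a genuinely different route from the paper's. The paper instead proves a Schur-concavity statement (Lemma \ref{lm:schur}) for $(b_1,\ldots,b_n)\mapsto\E|\sum\sqrt{b_j}X_j+Y|^p$ via an interlacing-of-densities argument; the upper bound then comes from pushing to the uniform weights and invoking the CLT, and the lower bound from the vertex $(1,0,\dots,0)$ followed by a one-parameter calculation in the $Z_b$ family. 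The conceptual advantage of Schur-concavity is that it never needs to touch moments of order $p-2$, which is exactly where your induction runs into trouble.

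The gap in your argument is real and your proposed patch does not close it. For $p\in[3,4)$ you replace the (false) Gaussian bound on $\E|X_{n-1}|^{p-2}$ with Jensen, $\E|X_{n-1}|^{p-2}\leq\sigma_{n-1}^{p-2}$. Plugging this and the inductive hypothesis into the recurrence and dividing by $\sigma_{n-1}^p\E|Z_0|^p$, the required step becomes, with $u = 2b_n/\sigma_{n-1}^2$,
\[
1 + \frac{p(p-1)}{2\,\E|Z_0|^p}\,u \;\leq\; (1+u)^{p/2}.
\]
Expanding the right side to first order in $u$, this needs $\frac{p(p-1)}{2\,\E|Z_0|^p}\leq\frac{p}{2}$, i.e.\ $\E|Z_0|^p\geq p-1$. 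But $\E|Z_0|^3 = 2\sqrt{2/\pi}\approx 1.60 < 2$, so at $p=3$ the inequality fails for all small $u>0$ (numerically it fails for $u$ up to roughly $1.5$). Thus the inductive step breaks as soon as a single small $b_n$ appears. The same issue persists, more mildly, throughout $p\in[3,5)$: the moment $\E|X_{n-1}|^{p-2}$ is of order in $[1,3)$, which is covered neither by Theorem \ref{thm:mom-comp} (not even) nor by the statement you are proving (not $\geq 3$), so the induction is not self-contained.

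Your second fallback — the Fourier representation $\E|Y|^p = c_p\int_0^\infty t^{-p-1}\bigl[\phi_Y(t)-1+\tfrac{\sigma^2t^2}{2}\bigr]\dd t$ for $p\in(2,4)$ — would reduce the upper bound to $\int_0^\infty t^{-p-1}\bigl(\phi_X(t) - e^{-\sigma^2 t^2/2}\bigr)\dd t\leq 0$, but the natural pointwise comparison $\phi_X(t)\leq e^{-\sigma^2 t^2/2}$ is false once two or more factors $1-b_jt^2$ turn negative: with $b_1=b_2=1$ and $t=2$, $\prod(1-b_jt^2)=9$ while $\prod e^{-b_jt^2}=e^{-8}$. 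So this route requires a genuinely integrated argument, not a pointwise one, and you have not supplied it. Finally, you silently use the lower bound in the form $\E|(\sigma/\sqrt{3})Z_1|^p\leq\E|X|^p$ (your check $3^{p/2}\geq p+1$ is the base case of exactly that); that is in fact the correct normalisation — as stated, $\E|\sigma Z_1|^p\leq\E|X|^p$ fails already for $X=Z_1$ since $\Var(Z_1)=3$ — but you should say so explicitly rather than let the rescaling pass unremarked.
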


First we prove Theorem \ref{thm:mom-comp} (relying on the product form of the moment generating function $\E\exp(zX)$ and exploiting Newton's inequalities for the elementary symmetric functions) and show how to obtain Corollaries \ref{cor:Hilbert} and \ref{cor:ferr}. Then we provide several examples of type $\mathscr{L}$ random variables and prove Theorem \ref{thm:p>3} (using techniques from \cite{ENT2, NZ}, relying on convexity properties of certain functions related to $|\cdot|^p$, hence the restriction $p \geq 3$). We conclude with a discussion comparing type $\mathscr{L}$, ultra sub-Gaussianity (introduced in \cite{NO}) and strong log-concavity (introduced in \cite{Gur}).

\subsection*{Acknowledgments} 
We should very much like to thank the referees for their careful reading of our manuscript and for their useful comments.

\section{Even moments}

We recall that a nonnegative sequence $(a_n)_{n \geq 0}$ is called log-concave if it is supported on a contiguous interval (which may be infinite) and satisfies $a_n^2 \geq a_{n-1}a_{n+1}$, $n \geq 1$. We remark that log-concavity implies the inequalities $a_ka_l \geq a_{k+j}a_{l-j}$ for all $k \geq l$, $0 \leq j \leq l$. We refer for instance to the classical survey \cite{stan} by Stanley.

\subsection{Proof of Theorem \ref{thm:mom-comp} for Rademacher sums}

It is instructive to first consider the case when $X$ from Theorem \ref{thm:mom-comp} is of the form $\sum_{j=1}^n a_j\e_j$, where $a_1, \dots, a_n$ are real coefficients and $\e_1, \e_2, \dots$ are i.i.d. Rademacher random variables (symmetric random signs, that is for each $j$, $\p{\e_j = -1} = \p{\e_j = 1} = \frac{1}{2}$). The proof in this case already encapsulates almost all the key ideas needed for the general case and at the same time provides a new, short, elementary and self-contained proof of the main result of \cite{NO}, the aforementioned classical Khinchin inequalities with sharp constants for even moments.

Let $X = \sum_{j=1}^m a_j\e_j$ and for $k \geq 0$, define
\begin{equation}\label{eq:def-rn}
r_k = \frac{\E X^{2k}}{\E G^{2k}}
\end{equation}
(so $r_0 = 1$ and $r_k = \frac{\E X^{2k}}{(2k)!}2^kk!$). Note that it suffices to show that the sequence $(r_k)_{k=0}^\infty$ is log-concave. This is because then in particular, by monotonicity of slopes, the sequence $\frac{\log r_k-\log r_0}{k}$ is nonincreasing and in view of $r_0 = 1$, this gives that the sequence $(r_k^{1/k})_{k\geq 0}$ is nonincreasing, which is exactly \eqref{eq:mom-comp}.
 
Plainly, since $X$ is symmetric and thanks to independence, for $t\geq 0$,
\[
\sum_{k=0}^\infty \frac{\E X^{2k}}{(2k)!}2^kt^k = \E\exp(\sqrt{2t}X) = \prod_{j=1}^m \E\exp(\sqrt{2t}a_j\e_j) = \prod_{j=1}^m \cosh(\sqrt{2t}a_j).
\]
The main point is that since $\cosh z = \prod_{n=1}^\infty \left(1 + \frac{z^2}{(n-\frac{1}{2})^2\pi^2}\right)$, $z \in \C$, the product on the right hand side takes the form
\[
\prod_{l=1}^m\prod_{n=1}^\infty \left(1 + \frac{2a_l^2}{(n-\frac12)^2\pi^2}t\right) = \prod_{j=1}^\infty (1+\alpha_j t) = \sum_{k=0}^\infty \sigma_kt^k,
\]
with nonnegative $\alpha_j$, where 
\[
\sigma_k = \sigma_k(\{\alpha_j\}) = \sum_{\substack{A \subset \{\alpha_j\} \\ |A| = k}}\prod_{a \in A} a
\]
is the $k$th elementary symmetric function of the (multi)set $\{\alpha_j\}$. Comparing the coefficients thus gives
\[
r_k = \frac{\E X^{2k}}{(2k)!}2^kk! = \sigma_kk!.
\]
Thanks to this expression for $r_k$, Newton's inequalities allow to finish the argument.

\textbf{Claim.} The sequence $(\sigma_kk!)_{k \geq 0}$ is log-concave.
\begin{proof}
Fix $k \geq 1$. By Newton's inequalities,
\[
\frac{\sigma_k(\{\alpha_j\}_{j=1}^n)^2}{\binom{n}{k}^2} \geq \frac{\sigma_{k-1}(\{\alpha_j\}_{j=1}^n)}{\binom{n}{k-1}}\frac{\sigma_{k+1}(\{\alpha_j\}_{j=1}^n)}{\binom{n}{k+1}}.
\]
Letting $n\to\infty$ gives the claim.
\end{proof}

\subsection{Proof of Theorem \ref{thm:mom-comp}}

We proceed with the proof of Theorem \ref{thm:mom-comp} in the general case when $X$ is type $\mathscr{L}'$. First we argue that it suffices to assume that $X$ is of type $\mathscr{L}$ and then give the proof in this case.

\emph{Type $\mathscr{L}$ vs. $\mathscr{L}'$.} We begin with a straightforward observation.

\begin{lemma}\label{lm:LvsL'}
If $X$ is a random variable of type $\sL'$, then there is $c \in \R$ such that $X-c$ is of type $\sL$.
\end{lemma}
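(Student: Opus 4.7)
My plan is to factorise the moment generating function $M(z) := \E e^{zX}$ using Hadamard's theorem, observe that its ``odd part'' in $z$ must reduce to a single exponential $e^{\beta z}$, and then take $c = -\beta$.

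By condition (i), $M$ is entire of order at most $2$, and by (ii') together with $M(0) = 1$, its zero set $\{z_j\}$ is contained in $i\R\setminus\{0\}$. Hadamard's factorisation theorem therefore gives
$$M(z) = \exp(\alpha z^2 + \beta z) \prod_j E_2(z/z_j),$$
with $E_2(w) = (1-w)\exp(w + w^2/2)$, where the constant term of the degree-$2$ polynomial has been fixed at $0$ using $M(0) = 1$. Since $X$ is real, $\overline{M(\bar z)} = M(z)$, so the multiset of zeros is invariant under complex conjugation; combined with $z_j \in i\R$, this pairs the zeros as $\pm i t_j$ with $t_j > 0$, and a direct computation yields
$$E_2(z/(it_j))\,E_2(z/(-it_j)) = \bigl(1 + z^2/t_j^2\bigr)\exp(-z^2/t_j^2),$$
an even function of $z$. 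Multiplying over pairs gives $M(z) = e^{\alpha z^2 + \beta z} F(z)$ for an even entire $F$, and positivity of both sides on $\R$ forces $\alpha, \beta \in \R$.

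Setting $c = -\beta$, the function $e^{cz}M(z) = e^{\alpha z^2} F(z)$ is the MGF of $X+c$; it is even, and its zero set coincides with that of $F$, hence still lies in $i\R$. Condition (i) is inherited: $|e^{cz} M(z)| \leq e^{|c||z|}\cdot C e^{C'|z|^2}$, and for $|z|\geq 1$ the linear term is absorbed into a slightly larger quadratic exponent (the disk $|z|\leq 1$ being trivially bounded). Therefore $X+c$ is of type $\sL$, as required.

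The only non-routine ingredient is the Hadamard factorisation together with the conjugate pairing of zeros; once these are in place everything is algebraic bookkeeping. As a sanity check, differentiating $e^{cz} M(z)$ at $z=0$ and using evenness of the result gives $c = -\E X$, so the required shift is simply the mean of $X$.
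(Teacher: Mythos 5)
Your proof is correct and follows essentially the same route as the paper: apply Hadamard's factorisation to $\E e^{zX}$, use the conjugate symmetry of its zeros to pair them as $\pm it_j$ and see that the product is even, so that the linear term $\beta z$ in the exponent is the only odd contribution, and shift by $c=-\beta$. The one cosmetic difference is that you work directly with the genus-$2$ canonical factors $E_2$ and keep the $\exp(-z^2/t_j^2)$ pieces inside the product, whereas the paper invokes Newman's refined form (Proposition 2 of \cite{N1}), which additionally yields $\sum\alpha_j<\infty$ and lets one absorb those pieces into the leading $e^{bz^2/2}$; that refinement is not needed for this lemma, so your more elementary version is perfectly adequate here.
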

\begin{proof}
As in Proposition 2 in \cite{N1}, by the Hadamard factorization theorem,
\begin{equation}\label{eq:rep}
\E\exp(zX) = \exp(bz^2/2+cz)\prod_j (1+\alpha_jz^2/2)
\end{equation}
for some $c \in \R$, $b \geq 0$ and $0< \alpha_1 \leq \alpha_2 \leq \ldots$ with $\sum \alpha_j < \infty$ (the set $\{\alpha_j\}$ may be empty, finite or infinite countable). Thus $Y = X-c$ is of type $\mathscr{L}$.
\end{proof}

\emph{Reduction to type $\mathscr{L}$.}
Let $X$ be a random variable of type $\mathscr{L}'$. Thus $Y = X-c$ is of type $\mathscr{L}$. Since $Y$ is a symmetric random variable,
\begin{equation}\label{eq:mom-same}
\E|X|^p = \E|Y+c|^p = \E|Y+c\varepsilon|^p
\end{equation}
where $\e$ is a Rademacher random variable independent of $Y$. Note that $Y + c\e$ is of type $\mathscr{L}$ (as a sum of two independent random variables of type $\mathscr{L}$). Assuming the theorem holds for all random variables of type $\mathscr{L}$, we thus conclude it for $X$ because $X$ has the same moments as $Y + c\e$.

\emph{Proof for type $\mathscr{L}$.}
Let $X$ be a random variable of type $\mathscr{L}$. Let $r_k$ be given by \eqref{eq:def-rn} and as in the Rademacher case, it suffices to show that the sequence $(r_k)_{k=0}^\infty$ is log-concave. Note that \eqref{eq:rep} holds with $c=0$, so applying it to $z = \sqrt{2t}$ with $t \geq 0$, we obtain
\[
\sum_{k=0}^\infty \frac{r_k}{k!}t^k = \sum_{k=0}^\infty \frac{\E X^{2k}}{(2k)!}2^kt^k = \E\exp(\sqrt{2t}X) = \exp(bt)\prod_j (1+\alpha_jt).
\]
Moreover, as in the Rademacher case, $\prod_j (1+\alpha_j t) = \sum_{k=0}^\infty \sigma_kt^k$, where $\sigma_k = \sigma_k(\{\alpha_j\})$ is the $k$th elementary symmetric function. Expanding $\exp(bt)$, the right hand side then becomes 
\[
\left(\sum_{k=0}^\infty \frac{b^k}{k!}t^k\right)\left(\sum_{k=0}^\infty \sigma_kt^k\right) = \sum_{k=0}^\infty\left(\sum_{j=0}^k \frac{b^{k-j}}{(k-j)!}\sigma_{j} \right) t^k.
\]
By comparing the coefficients,
\[
r_k = k!\sum_{j=0}^k \frac{b^{k-j}}{(k-j)!}\sigma_{j} = \sum_{j=0}^k \binom{k}{j} b^{k-j}\sigma_{j}j!.
\]
The sequence $(b^k)_{k=0}^\infty$ is trivially log-concave and, as noted in the Rademacher case, the sequence $(\sigma_kk!)_{k=0}^\infty$ is also log-concave.  Walkup's theorem (see, e.g. \cite{Gur2, Lig, NO, W}) which says that the binomial convolution preserves log-concavity yields that the sequence $(r_k)_{k \geq 0}$ is also log-concave and the proof of Theorem~\ref{thm:mom-comp} is complete.\hfill$\square$

Incidentally, since $(r_n)_{n \geq 0}$ being log-concave means that $X$ is ultra sub-Gaussian (see Section \ref{sec:notions} and \cite{NO}), we have obtained the following result.

\begin{theorem}\label{thm:type-L-USG}
If a random variable $X$ is of type $\sL'$, then the sequence $(r_n)_{n=0}^\infty$ defined in \eqref{eq:def-rn} is log-concave. In particular, type $\sL$ random variables are ultra sub-Gaussian.
\end{theorem}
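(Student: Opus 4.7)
My plan is to observe that Theorem \ref{thm:type-L-USG} is in fact proved \emph{en route} in the proof of Theorem \ref{thm:mom-comp}, and to organise the writing so as to isolate this fact cleanly.

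First I would redo the reduction used there: by Lemma \ref{lm:LvsL'}, write $X = Y + c$ with $Y$ of type $\sL$, and let $\e$ be a Rademacher variable independent of $Y$. By symmetry of $Y$, one has $\E X^{2n} = \E(Y+c\e)^{2n}$ for every $n \geq 0$, which is \eqref{eq:mom-same} specialised to even integer exponents (where the absolute value is redundant). Since $Y + c\e$ is a sum of two independent type $\sL$ variables and so of type $\sL$ by Remark \ref{rem:typeL-sums}, the sequence $(r_n)_{n\geq 0}$ attached to $X$ coincides with the one attached to the type $\sL$ variable $Y + c\e$. It therefore suffices to prove the theorem in the type $\sL$ case.

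For $X$ of type $\sL$, the Hadamard factorisation \eqref{eq:rep} holds with $c = 0$, and the coefficient extraction carried out in the proof of Theorem \ref{thm:mom-comp} gives
\[
r_n = \sum_{k=0}^n \binom{n}{k} b^{n-k}\, \sigma_k k!.
\]
I would then invoke the Claim appearing in that proof, namely that $(\sigma_k k!)_{k \geq 0}$ is log-concave (Newton's inequalities applied to each truncation $\{\alpha_j\}_{j=1}^N$, followed by $N \to \infty$). Combined with the trivial log-concavity of the geometric sequence $(b^k)_{k \geq 0}$, Walkup's theorem on binomial convolutions then delivers log-concavity of $(r_n)_{n \geq 0}$.

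The ``in particular'' assertion is definitional: ultra sub-Gaussianity, as introduced in \cite{NO} (and recalled in Section \ref{sec:notions}), means precisely log-concavity of $(r_n)_{n\geq 0}$. There is no real obstacle — the only non-cosmetic point is checking that the reduction $X \mapsto Y + c\e$ preserves the even moments, which is where the symmetry of $Y$ is used; everything else is a repackaging of the steps already present in the proof of Theorem \ref{thm:mom-comp}.
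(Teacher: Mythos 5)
Your proposal is correct and matches the paper's own argument: the paper likewise proves Theorem~\ref{thm:type-L-USG} by observing that the log-concavity of $(r_n)$ was already established for type~$\sL$ variables during the proof of Theorem~\ref{thm:mom-comp} (Hadamard factorisation, Newton's inequalities, Walkup's theorem), and then extends to type~$\sL'$ via Lemma~\ref{lm:LvsL'} and the moment identity~\eqref{eq:mom-same}. Your writeup merely spells out the intermediate steps more explicitly than the paper's two-line proof does, but the route is the same.
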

\begin{proof}
It has been shown above that if $X$ is of type $\sL$, then $X$ is ultra sub-Gaussian. Consequently, by Lemma \ref{lm:LvsL'} and \eqref{eq:mom-same}, the same holds if $X$ is of type~$\sL'$.
\end{proof}

\subsection{Proof of Corollary \ref{cor:Hilbert}}

First suppose that $H$ is real Hilbert space. Fix an orthonormal basis $e_1, e_2, \ldots$ in $H$. Let $g_1, g_2, \ldots$ be i.i.d. standard Gaussian random variables (independent of the $X_j$). Consider the map
\[
H \ni x \mapsto c_q\sum_{k} \scal{x}{e_k}g_k \in L_q(\Omega,\mathbb{P}).
\]
Since sums of independent Gaussians are Gaussian, this map with $c_q = \|G\|_q^{-1}$ is an isometrical embedding. Given vectors $v_1,\ldots, v_n$ in $H$, for appropriate scalars $a_j$, we thus have 
\begin{equation}\label{eq:isom}
\left\|\sum_{j} X_jv_j\right\|^q = \E_g\left|\sum_j a_jX_j\right|^q, \qquad  a_j = c_q\scal{v_j}{\sum_{k}g_ke_k}.
\end{equation}
Using this twice, Theorem \ref{thm:mom-comp} applied to $\sum_j a_jX_j$ and Minkowski's inequality, we arrive at
\begin{align*}
\E_X\left\|\sum_{j} X_jv_j\right\|^q &= \E_g\E_X\left|\sum_j a_jX_j\right|^q \\
&\leq \left(\frac{\|G\|_q}{\|G\|_p}\right)^{q}\E_g\left(\E_X\left|\sum_j a_jX_j\right|^p\right)^{q/p} \\
&\leq \left(\frac{\|G\|_q}{\|G\|_p}\right)^{q}\left(\E_X\left(\E_g\left|\sum_j a_jX_j\right|^q\right)^{p/q}\right)^{q/p} \\
&= \left(\frac{\|G\|_q}{\|G\|_p}\right)^{q}\left(\E_X\left\|\sum_j X_jv_j\right\|^p\right)^{q/p}.
\end{align*}
This finishes the proof when $H$ is real. For the complex case, the only required modification of the above argument is in the definition of the isometrical embedding which now reads
\[
H \ni x \mapsto c_q\sum_{k} \Big(\text{Re}\scal{x}{e_k}g_k +\text{Im}\scal{x}{e_k}g_k'\Big)  \in L_q(\Omega,\mathbb{P}),
\]
where $g_1, g_1', g_2, g_2', \ldots$ are i.i.d. standard Gaussian random variables. Identity \eqref{eq:isom} holds with $a_j = c_q\Big(\text{Re}\scal{v_j}{\sum_{k}g_ke_k} + \text{Im}\scal{v_j}{\sum_{k}g_k'e_k}\Big)$ and the rest of the argument is unchanged.
\hfill$\square$

\subsection{Proof of Corollary \ref{cor:ferr}}

Let $(X_1, \ldots, X_n)$ be a random vector with distribution given by \eqref{eq:ferr-density} and let $\e$ be an independent Rademacher random variable. By Proposition 6 from \cite{N3}, the vector $(Y_0,Y_1,\dots,Y_n) = (\e,\e X_1,\dots,\e X_n)$ has distribution $\rho'$ of the form
\[
\dd\rho'(x_0,x_1,\ldots,x_n) = Z'^{-1}\exp\left(\sum_{j,k=0}^n J_{jk}'x_jx_k\right)\dd\mu_0(x_0)\dd\mu_1(x_1)\dots\dd\mu_n(x_n),
\]
where $\mu_0$ is the distribution of $\e$, $J_{0,0}' = 0$, $J_{0,k}' = J_{k,0}'= h_k/2$, $J_{jk}' = J_{jk}$, $j,k \geq 1$, so of the form \eqref{eq:ferr-density} with $h \equiv 0$. Therefore, by Theorem 2 from \cite{N2}, for every $a_0, a_1,\dots, a_n \geq 0$, the sum $S = \sum_{j=0}^n a_jY_j = a_0\e + \sum_{j=1}^n a_j\e X_j$ is of type $\sL$ and in particular, $S$ satisfies \eqref{eq:mom-comp}. Hence, taking $a_0 = 0$ yields that $\sum_{j=1}^n a_jX_j$ also satisfies \eqref{eq:mom-comp}.\hfill$\square$

\section{Examples of type $\mathscr{L}$ random variables}\label{sec:examples}

We list some examples of probability distributions of type $\sL$. This is based on several well-known sufficient conditions for Fourier transforms of positive functions to have only real zeros. We mention merely in passing that the development of such conditions, related to the study of the Laguerre-P\'olya class, was initiated mainly with connections to the Riemann hypothesis and refer to the comprehensive survey \cite{sur}. In what follows, $X$ is a symmetric random variable.

\begin{enumerate}[(a)]

\item 
Let $X$ be integer-valued with $\p{X = 0} = p_0$ and $\p{X = -k} = \p{X = k} = p_k$, $k = 1, \dots, n$ for nonnegative $p_0, \dots, p_n$ with $p_0 + 2\sum_{k=1}^n p_k = 1$. 

If $\frac{1}{2}p_0 \leq p_1 \leq \dots \leq p_n$, then $\E \cos(zX) = p_0 + \sum_{k=1}^n (2p_k)\cos(kz)$ has only real zeros, as it follows from the Enestr\"om-Kakeya theorem (see, e.g. Problem III.204 in \cite{PS1}). As a result, $X$ is of type $\sL$. In particular, if $X$ is uniform on $\{-n,\dots, 1, 1, \dots, n\}$ with a possible atom at $0$ satisfying $\p{X = 0} \leq \frac{1}{n+1}$, then $X$ is of type $\sL$ (the last class was also recently studied in \cite{HT} in the context of Khinchin inequalities).

By the symmetry of $X$, the polynomial $Q(w) = \E w^{X+n}$ is self-inversive (the sequence of its coefficients is a palindrome, in other words, $w^{2n}Q(1/w) = Q(w)$). In particular, all its roots are symmetric with respect to the unit circle, that is if $w_0$ is a root of $Q$, then so is $1/w_0$. For instance, if for some $\alpha \geq 1$,
\[
\frac{1}{2}p_0^\alpha + \sum_{k=1}^{n-1} p_k^\alpha \leq \left(\frac{2}{n-2}\right)^{\alpha-1}p_n^\alpha,
\]
where $n$ is the number of nonzero coefficients of $Q$, then $Q$ has zeros only on the unit circle, so $X$ is of type $\sL$, which follows from the main result of \cite{SV}. In general, the Jury test provides an efficient algorithm to determine whether $Q$ has zeros only on the unit circle, given its coefficients, see \cite{J}.

\item
Let $X$ take values in $[-1,1]$ and have a density $f$ (which is even). Each of the following conditions implies that $X$ is of type $\sL$.
\begin{enumerate}[(i)]
\item $f$ is  nondecreasing on $(0,1)$.

\item $f$ is $C^2$ with $f' < 0$ and $f'' < 0$ on $(0,1)$.

\item $f(t) = h(t)^{\alpha}$, where $\alpha > -1$ and $h$ is an entire even function which is real-valued on the real line, $h(1) = 0$, $h(0) > 0$ and $h'(iz)$ is  in the Laguerre-P\'olya class. In particular, $f(t) = \text{const}\cdot (1-t^{2m})^{\alpha}$ for a nonnegative integer $m$.  
\end{enumerate}

Moreover, if $X$ has a density on $\R$ of the following form, then it is of type $\sL$.

\begin{enumerate}
\item[(iv)] $f(t) = \text{const}\cdot e^{-t^{2m}}$ for a nonnegative integer $m$.  

\item[(v)] $f(t) = (2\pi)^{-1/2}e^{-t^2/2}(1-b+ bt^2)$, $0 \leq b \leq 1$.
\end{enumerate}

Condition (i) is justified again by the Enestr\"om-Kakeya theorem combined with a limit argument (see, e.g. Problem III.205 in \cite{PS1}), (ii) is due to P\'olya (see, e.g. Problem V.173 in \cite{PS}), (iii) is due to Ilieff (see \cite{Il}), which gives (iv) by a limit argument (see also Problem V.170 and 171 in \cite{PS}), (v) is justified by a direct computation of the moment generating function which is $(1+bz^2)e^{z^2/2}$. Moreover, if the density of $X$ is of the form $f(t) = \text{const}\cdot e^{-|t|^{\alpha}}$ with $\alpha \geq 2$, $\alpha \notin \{2,4,\dots\}$, then its characteristic function has infinitely many non-real zeros, in particular $X$ is not of type $\sL$ (see the solution of Problem V.171 in \cite{PS}).
\end{enumerate}

We refer to \cite{N1} for additional examples important in statistical mechanics.

\section{Type $\sL$ random variables with ``enough Gaussianity''}

The goal in this section is to prove Theorem \ref{thm:p>3}. We need a little bit of preparation. For a real parameter $b$, consider the following function
\[
\phi_b(t) = e^{-t^2/2}(1-bt^2), \qquad t \in \R,
\]
which is in $L_1$. By the Fourier inversion formula, we have,
\[
f_b(x) = \frac{1}{2\pi}\int_{-\infty}^\infty \phi_b(t)e^{-itx}\dd t = (1-b+bx^2)\frac{e^{-x^2/2}}{\sqrt{2\pi}}, \qquad x \in \R.
\]
Clearly, $f_b$ is nonnegative on $\R$ if and only if $b \in [0,1]$. Thus $\phi_b$ is the characteristic function of a random variable precisely for every $b \in [0,1]$ and then its density is $f_b$ and variance is the coefficient $2\phi_b(it)[t^2] = 1+2b$. Throughout, $Z_b$ is a random variable with characteristic function $\phi_b$. In particular, $Z_0$ is standard Gaussian and $Z_1$ has the bimodal density $(2\pi)^{-1/2}x^2e^{-x^2/2}$.

\begin{lemma}\label{lm:ch.f.}
For every $a > 0$ and $b_1, b_2, \ldots \geq 0$ with $\sum b_j \leq a$, the function
\[
e^{-at^2/2}\prod_j (1-b_jt^2)
\]
is the characteristic function of a type $\sL$ random variable.
\end{lemma}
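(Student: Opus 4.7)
The plan is to construct an explicit random variable $Y$ whose characteristic function is the given expression, and then verify the two defining properties of type~$\sL$ by identifying its moment generating function with the natural entire extension. Set $c = a - \sum_j b_j \geq 0$, let $Z_0$ be a standard Gaussian, and let $Z_1^{(1)}, Z_1^{(2)}, \ldots$ be independent copies of $Z_1$, everything jointly independent. Each $\sqrt{b_j}\,Z_1^{(j)}$ is centred with variance $3b_j$ (using $\Var(Z_1) = 1 + 2\cdot 1 = 3$, read off from the pre-Lemma computation), so $\sum_j 3b_j \leq 3a < \infty$ and Kolmogorov's two-series theorem gives almost sure convergence of
\[
Y \,:=\, \sqrt{c}\,Z_0 \,+\, \sum_{j=1}^\infty \sqrt{b_j}\,Z_1^{(j)}.
\]
By independence, the characteristic function of the $n$-th partial sum equals $e^{-(c+\sum_{j\leq n}b_j)t^2/2}\prod_{j\leq n}(1-b_jt^2)$, which converges pointwise as $n\to\infty$ to the prescribed expression; hence $\phi_Y$ is as required.

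Now introduce the entire function
\[
F(z) \,:=\, e^{a z^2/2}\prod_{j=1}^\infty (1+b_j z^2),
\]
whose infinite product converges locally uniformly because $\sum_j b_j < \infty$. Plainly $F$ is even, its zeros are the purely imaginary points $\pm i/\sqrt{b_j}$, and the elementary estimate $|F(z)| \leq e^{a|z|^2/2}\exp(|z|^2\sum_j b_j) \leq e^{3a|z|^2/2}$ is immediate. Both conditions of type~$\sL$ for $Y$ will therefore follow once one establishes $\E e^{zY} = F(z)$ for every $z \in \C$. The route I would take is first to observe that $Y$ is sub-Gaussian, since the squared $\psi_2$-norms of the summands are of order $b_j$ and hence summable; this ensures $z \mapsto \E e^{zY}$ is an entire function of order at most~$2$. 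For real $z$, the analytic continuation of $\phi_1$ gives $\E e^{z\sqrt{b_j}Z_1^{(j)}} = e^{b_j z^2/2}(1+b_j z^2)$, and then a dominated convergence argument on the partial sums (using the sub-Gaussian tails of $Y$) yields $\E e^{zY} = F(z)$ on $\R$. The identity theorem then extends the equality to all of $\C$.

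The single nontrivial step is this interchange of expectation with the infinite product in establishing $\E e^{zY} = F(z)$ on the real line; I expect it to reduce to a short uniform integrability / dominated convergence argument underpinned by the sub-Gaussian control of $Y$. Everything else---analyticity of $F$, the quadratic growth bound, evenness, and the location of the zeros---falls out at once from the factored form.
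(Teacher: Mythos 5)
Your construction is exactly the paper's: decompose the given function as the characteristic function of $\sqrt{a-\sum_j b_j}\,Z_0 + \sum_j \sqrt{b_j}\,Z_1^{(j)}$ with all summands independent, $Z_0$ standard Gaussian and $Z_1^{(j)}$ i.i.d.\ copies of $Z_1$. The paper states this in one line (invoking $L_2$ convergence of the series and relying on the closure of type $\sL$ under independent sums, Remark~\ref{rem:typeL-sums}), while you fill in the details the paper leaves implicit: a.s.\ convergence via Kolmogorov's two-series theorem, the explicit entire extension $F(z)=e^{az^2/2}\prod_j(1+b_jz^2)$, the growth bound $|F(z)|\leq e^{3a|z|^2/2}$, and the identification $\E e^{zY}=F(z)$ by dominated convergence plus the identity theorem. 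The approach and the key decomposition are the same; yours is just a more fully worked-out version.
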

\begin{proof}
Let $X_1, X_2, \ldots$ be i.i.d. copies of $Z_1$ and let $Z_0$ be an independent standard Gaussian. Then $\sum \sqrt{b_j}X_j + \sqrt{a-\sum b_j}Z_0$ (which converges in $L_2$ if infinitely many $b_j$ are positive) is the desired random variable. 
\end{proof}

\begin{lemma}\label{lm:interlacing}
For $\lambda \in (0,1)$, let $g_\lambda$ be the density of $\sqrt{\lambda}X_1 + \sqrt{1-\lambda}X_2$, where $X_1, X_2$ are independent copies of $Z_1$. Then for every $0 < \lambda_1 < \lambda_2 < \frac{1}{2}$, the function $g_{\lambda_2} - g_{\lambda_1}$ on $(0,+\infty)$ has exactly two zeros and the sign pattern $+-+$.
\end{lemma}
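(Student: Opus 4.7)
The plan is to compute $g_\lambda$ explicitly, after which the lemma will reduce to analyzing the sign of a single biquadratic polynomial.

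First I would identify the characteristic function of $\sqrt{\lambda}X_1+\sqrt{1-\lambda}X_2$. Since each $X_i$ has characteristic function $\phi_1(t)=e^{-t^2/2}(1-t^2)$, independence gives
\[
\phi_\lambda(t) = e^{-\lambda t^2/2}(1-\lambda t^2)\cdot e^{-(1-\lambda)t^2/2}(1-(1-\lambda)t^2) = e^{-t^2/2}\bigl[1 - t^2 + \lambda(1-\lambda)t^4\bigr].
\]
Note the crucial feature: $\phi_\lambda$ depends on $\lambda$ only through the product $s=\lambda(1-\lambda)$.

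Next I would invert term by term using the standard identity $\varphi^{(2k)}(x) = H_{2k}(x)\varphi(x)$ with Hermite polynomials $H_2(x)=x^2-1$ and $H_4(x)=x^4-6x^2+3$. Concretely, $\frac{1}{2\pi}\int t^{2k} e^{-t^2/2} e^{-itx}\,\dd t = (-1)^k \varphi^{(2k)}(x)\cdot(-1)^k = \varphi^{(2k)}(x)$ after tracking the sign from $(-it)^{2k}=(-1)^k t^{2k}$. This yields
\[
g_\lambda(x) = \varphi(x)\bigl[x^2 + \lambda(1-\lambda)(x^4 - 6x^2 + 3)\bigr],
\]
which incidentally confirms non-negativity for $\lambda\in[0,1]$ (one can check the quadratic in $x^2$ stays $\geq 0$ when $\lambda(1-\lambda)\leq 1/4$). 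Taking the difference and using the fact that $\lambda\mapsto\lambda(1-\lambda)$ is strictly increasing on $(0,1/2)$, I get
\[
g_{\lambda_2}(x) - g_{\lambda_1}(x) = \bigl(\lambda_2(1-\lambda_2)-\lambda_1(1-\lambda_1)\bigr)\,\varphi(x)\,(x^4 - 6x^2 + 3)
\]
with a strictly positive prefactor.

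The problem thus reduces to the sign of $p(x)=x^4-6x^2+3$ on $(0,\infty)$. Setting $y=x^2$, the roots of $y^2-6y+3$ are $3\pm\sqrt{6}$, both positive, so $p$ has exactly the two positive zeros $\sqrt{3-\sqrt{6}}$ and $\sqrt{3+\sqrt{6}}$. Evaluating at the boundaries ($p(0)=3>0$, $p(x)\to+\infty$) and at the midpoint ($p(\sqrt{3})=-6<0$) gives the sign pattern $+,-,+$, establishing the claim. The computation is essentially mechanical once one recognises that the whole $\lambda$-dependence collapses onto the scalar $\lambda(1-\lambda)$; the only care needed is in bookkeeping the Hermite-polynomial inversion, which I would not expect to be a genuine obstacle.
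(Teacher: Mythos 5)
Your proof is correct and takes essentially the same route as the paper: compute $g_\lambda$ explicitly via the characteristic function, observe that the $\lambda$-dependence enters only through $\lambda(1-\lambda)$, and reduce to the sign of the biquadratic $x^4-6x^2+3$; the paper simply states the formula for $g_\lambda$ "by a direct computation" and factors the prefactor as $(\lambda_2-\lambda_1)(1-\lambda_1-\lambda_2)$, which equals your $\lambda_2(1-\lambda_2)-\lambda_1(1-\lambda_1)$. One small exposition slip: the line
$\frac{1}{2\pi}\int t^{2k}e^{-t^2/2}e^{-itx}\,\dd t = (-1)^k\varphi^{(2k)}(x)\cdot(-1)^k=\varphi^{(2k)}(x)$
has one spurious $(-1)^k$; the correct identity is $\frac{1}{2\pi}\int t^{2k}e^{-t^2/2}e^{-itx}\,\dd t = (-1)^k\varphi^{(2k)}(x)=(-1)^kH_{2k}(x)\varphi(x)$, and it is this single $(-1)^k$ that converts $1-H_2(x)$ into $1+(x^2-1)=x^2$. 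Your final formula for $g_\lambda$ is nonetheless the correct one (it reduces to the density of $Z_1$ when $\lambda\in\{0,1\}$), so the conclusion stands.
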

\begin{proof}
By a direct computation, 
\[g_\lambda(x) = \Big(x^2+\lambda(1-\lambda)(3-6x^2+x^4)\Big)\frac{e^{-x^2/2}}{\sqrt{2\pi}}
\]
so $g_{\lambda_2} - g_{\lambda_1}$ has the same sign as $(\lambda_2-\lambda_1)(1-\lambda_1-\lambda_2)(3-6x^2+x^4)$.
\end{proof}

\begin{lemma}\label{lm:schur}
Let $X_1, X_2, \dots$ be i.i.d. copies of $Z_1$ and let $Y$ be a symmetric random variable independent of the $X_j$. Then the function
\[
\Psi(b_1,\dots,b_n) = \E|\sqrt{b_1}X_1+\dots+\sqrt{b_n}X_n + Y|^p
\]
is Schur-concave on $[0+\infty)^n$.
\end{lemma}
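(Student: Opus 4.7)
The strategy is to reduce the $n$-variable Schur-concavity of $\Psi$ to a one-parameter monotonicity statement, identify the resulting density difference as a multiple of a Hermite function times a Gaussian, and then apply an integration-by-parts argument. By the symmetry of $\Psi$, it suffices to fix $b_3, \ldots, b_n \geq 0$ and show that $\Psi$ is Schur-concave in $(b_1, b_2)$. Setting $W := \sum_{j\geq 3}\sqrt{b_j}X_j + Y$, a symmetric random variable independent of $X_1, X_2$, and writing $s = b_1 + b_2$ with $\lambda = b_1/s$, Schur-concavity translates to the assertion that
\[
F(\lambda) := \E\left|\sqrt{s}\left(\sqrt{\lambda}X_1 + \sqrt{1-\lambda}X_2\right) + W\right|^p
\]
is nondecreasing on $[0, 1/2]$. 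A standard mollification (adding an independent centered Gaussian of small variance to $W$ and letting it tend to $0$) reduces to the case when $W$ has a symmetric density $h$ on $\R$.

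The key structural input is the explicit formula from the proof of Lemma~\ref{lm:interlacing}: writing $\phi(x) = (2\pi)^{-1/2}e^{-x^2/2}$,
\[
g_\lambda(x) = \left(x^2 + \lambda(1-\lambda)(x^4 - 6x^2 + 3)\right)\phi(x),
\]
whence
\[
g_{\lambda_2}(x) - g_{\lambda_1}(x) = \left(\lambda_2(1-\lambda_2) - \lambda_1(1-\lambda_1)\right) H_4(x)\phi(x),
\]
where $H_4(x) := x^4 - 6x^2 + 3$ is the fourth probabilist's Hermite polynomial, satisfying the classical identity $\phi^{(4)}(x) = H_4(x)\phi(x)$. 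For $0 < \lambda_1 < \lambda_2 \leq 1/2$ the scalar prefactor is strictly positive. After rescaling to $s = 1$, Fubini's theorem reduces $F(\lambda_2) \geq F(\lambda_1)$ to
\[
\int_\R h(y) J(y)\, \dd y \geq 0, \qquad J(y) := \int_\R |x+y|^p H_4(x)\phi(x)\, \dd x,
\]
so since $h \geq 0$, it suffices to show $J(y) \geq 0$ for every $y \in \R$ whenever $p \geq 3$.

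For this pointwise bound, fourfold integration by parts against $\phi^{(4)} = H_4\phi$ (boundary terms vanishing by Gaussian decay) formally yields
\[
J(y) = \int_\R \frac{\dd^4}{\dd x^4}|x+y|^p \cdot \phi(x)\, \dd x.
\]
For $p > 3$, the third derivative $p(p-1)(p-2)|x+y|^{p-3}\sgn(x+y)$ is absolutely continuous, the fourth derivative $p(p-1)(p-2)(p-3)|x+y|^{p-4} \geq 0$ is locally integrable, and the integration by parts is rigorous, giving $J(y) \geq 0$. For the boundary case $p = 3$, the third derivative $6\,\sgn(x+y)$ has a jump of $12$ at $x = -y$; performing only three integrations by parts and evaluating $-\int 6\,\sgn(x+y)\phi'(x)\, \dd x$ directly by splitting at $x = -y$ gives $J(y) = 12\phi(y) \geq 0$. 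The main technical obstacle is precisely this regularity issue for $p \in [3, 4)$; it is also why the method breaks down for $p \in (2, 3)$, since then the factor $p(p-1)(p-2)(p-3)$ becomes negative, consistent with the hypothesis $p \geq 3$ being essential.
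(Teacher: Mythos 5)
Your proof is correct, and it takes a genuinely different route at the decisive step. Both arguments reduce, as they must, to showing that for each fixed value $y$ of the ``background'' term and each $0 < \lambda_1 < \lambda_2 < \tfrac12$,
\[
\int_{\R} |x+y|^p\,\bigl(g_{\lambda_2}(x)-g_{\lambda_1}(x)\bigr)\,\dd x \;\geq\; 0,
\]
and both exploit the closed form $g_\lambda(x) = \bigl(x^2+\lambda(1-\lambda)(x^4-6x^2+3)\bigr)\phi(x)$. From there the methods diverge. The paper's proof is a two-point \emph{interlacing} argument: the difference has sign pattern $+-+$ on $(0,\infty)$; one subtracts a quadratic $\alpha x^2+\beta$ (which integrates to zero against the difference, since variances and total masses match) so that $\tilde h(x)=|x+y|^p+|x-y|^p+\alpha x^2+\beta$ vanishes at the two crossing points; then convexity of $\tilde h(\sqrt{x})$ for $p\geq 3$ forces $\tilde h$ to share the sign pattern $+-+$, making the integrand pointwise nonnegative. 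Your proof instead identifies $g_{\lambda_2}-g_{\lambda_1}$ as a positive multiple of $\phi^{(4)}=H_4\phi$ and does fourfold integration by parts, reducing the inequality to nonnegativity of $\tfrac{\dd^4}{\dd x^4}|x+y|^p = p(p-1)(p-2)(p-3)|x+y|^{p-4}$; the regularity issue at $p\in(3,4)$ is correctly handled (the third derivative $c\,|x+y|^{p-3}\sgn(x+y)$ is absolutely continuous with locally integrable derivative, since $p-4>-1$), and the boundary case $p=3$ is dispatched separately with the clean identity $J(y)=12\,\phi(y)$. What each buys: the paper's interlacing technique is more robust — it would apply to any density difference with the sign pattern $+-+$ without needing a Hermite identity — while your integration-by-parts argument is more direct, gives an explicit nonnegative integral representation for $J(y)$, and makes the role of the threshold $p\geq 3$ transparent (the constant $p(p-1)(p-2)(p-3)$ changes sign at $p=3$). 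One small remark: the mollification step you introduce to give $W$ a density is convenient but not necessary — one can simply disintegrate with respect to the law of $W$ via Fubini and prove the pointwise inequality $J(y)\geq 0$ for each $y$, which is what you in fact do.
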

\begin{proof}
We use the technique of interlacing densities (see, e.g. \cite{ENT2} or \cite{NZ}). Let $h(x) = |x+1|^p+|x-1|^p$. It suffices to show that for every $0 < \lambda_1 < \lambda_2 < \frac{1}{2}$, we have
\[
\int_0^\infty h(x)(g_{\lambda_2(x)} - g_{\lambda_1}(x)) \dd x \geq 0,
\]
where $g_\lambda$ is as in Lemma \ref{lm:interlacing}. For arbitrary $\alpha, \beta$, 
\[
\int (\alpha x^2+\beta)(g_{\lambda_2}(x) - g_{\lambda_1}(x)) \dd x = 0.\]
This is because $g_\lambda$ is a probability density, so $\int g_\lambda(x) \dd x = 1$ and by its definition, regardless of the value of $\lambda$, it has the same variance as $Z_1$, so $\int x^2g_\lambda(x) \dd x = \Var(Z_1)$. Thus the desired inequality is equivalent to 
\[
\int_0^\infty \tilde h(|x|)(g_{\lambda_2(x)} - g_{\lambda_1}(x)) \dd x \geq 0,
\]
with $\tilde h(x) = h(x) + \alpha x^2 + \beta$. Let $x_1, x_2$ be the zeros of $g_{\lambda_2(x)} - g_{\lambda_1}(x)$. Choose $\alpha$ and $\beta$ such that $\tilde h$ has zeros at $x_1$ and $x_2$. Since for $p \geq 3$, $\tilde h(\sqrt{x})$ is convex on $(0,+\infty)$, $\tilde h$ on $(0,+\infty)$ has no other zeros and the sign pattern $+-+$. Thus the integrand is pointwise nonnegative, hence the result.
\end{proof}

Theorem \ref{thm:p>3} is a rather straighforward consequence of the above lemma.

\begin{proof}[Proof of Theorem \ref{thm:p>3}]
By a standard approximation argument of truncating the infinite product if necessary, we can assume that only finitely many $b_j$ are nonzero, say $b_1, \dots, b_n > 0$. By homogeneity, we can also assume that $\sum b_j = 1$. Set $a = 1 + v$ with $v \geq 0$. Then $X$ has the same distribution as $\sum \sqrt{b_j}X_j +  \sqrt{v}Z_0$, where $X_1, X_2, \dots$ are i.i.d. copies of $Z_1$ and $Z_0$ is an independent standard Gaussian. Note that
\[
\sigma^2 = \Var(X) = \sum b_j\Var(X_j) + v\Var(Z_0) = \Var(X_1) + v = 3 + v.
\]

By Lemma \ref{lm:schur},
\[
\E|X|^p \leq \E\left|\sum_{j=1}^n \frac{1}{\sqrt{n}}X_j + \sqrt{v}Z_0\right|^p.
\]
Moreover, also by Lemma \ref{lm:schur}, the right hand side is nondecreasing as $n$ increases. Letting $n \to \infty$ and invoking the central limit theorem gives the upper bound. 

For the lower bound, again thanks to Lemma \ref{lm:schur},
\[
\E|X|^p \geq \E|Z_1 + \sqrt{v}Z_0|^p.
\]
The random variable $\sigma^{-1/2}(Z_1+\sqrt{v}Z_0)$ has the same distribution as $(1+2b)^{-1/2}Z_b$ with $b = \frac{1}{1+v}$, by comparing the characteristic functions. By a direct computation (using the density of $Z_b$),
\[
\E|(1+2b)^{-1/2}Z_b|^p = \frac{2^{p/2}\Gamma\left(\frac{p+1}{2}\right)}{\sqrt{\pi}}\frac{1+pb}{(1+2b)^{p/2}}
\]
and we check that the right hand side as a function of $b$ with $p \geq 3$ fixed is decreasing, which finishes the proof.
\end{proof}

\section{Ultra sub-Gaussianity and related notions}\label{sec:notions}

Here we only consider real-valued random variables.
Recall that after \cite{NO}, a random variable $X$ is called \textbf{ultra sub-Gaussian} if it is symmetric, has all moments finite and the sequence $(a_n)_{n=0}^\infty$ defined by $a_0 = 1$, $a_n = \frac{\E X^{2n}}{\E G^{2n}}$, $n \geq 1$, is log-concave.  Note that then the function
\[
f_X(t) = \E \exp(\sqrt{t}X) = \sum_{n=0}^\infty \frac{\sqrt{t}^{2n}\E X^{2n}}{(2n)!} = \sum_{n=0}^\infty \frac{\E X^{2n}}{(2n-1)!!}\frac{t^n}{2^nn!} = \sum_{n=0}^\infty \frac{a_n}{n!}(t/2)^n
\]
is $C^\infty(\R)$ and the inequalities $a_n^2 \geq a_{n-1}a_{n+1}$, $n \geq 1$, are equivalent to
\begin{equation}\label{eq:f_X-at0}
\big(f_X^{(n)}(0)\big)^2 \geq f_X^{(n-1)}(0)f_X^{(n+1)}(0), \qquad n \geq 1.
\end{equation}
After \cite{Gur}, an entire function $f\colon \C\to\C$ is called \textbf{strongly log-concave} if for every $n \geq 0$, $f^{(n)}$ is either zero or $\log f^{(n)}$ is concave on $\R_+$. Thus we shall say that a random variable is \textbf{strongly log-concave} if it is symmetric, $z \mapsto \E\exp(zX)$ defines an entire function and $f_X^{(n)}$ is log-concave for every $n \geq 0$, where $f_X(t) = \E\exp(\sqrt{t}X)$. Then, in particular, inequalities \eqref{eq:f_X-at0} hold and thus $X$ is also ultra sub-Gaussian. Remarkably, the converse is also true by the following result of Gurvits (see Proposition 4.2(ii) in \cite{Gur}).

\begin{theorem}[Gurvits, \cite{Gur}]\label{thm:Gur}
If a nonnegative sequence $(a_n)_{n=0}^\infty$ is log-concave, then the power series $f(t) = \sum_{n=0}^\infty \frac{a_n}{n!}t^n$ defines a $C^\infty$ function which is log-concave on $\R_+$.
\end{theorem}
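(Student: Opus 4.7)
The plan is to prove smoothness and log-concavity separately, reducing the latter to an explicit identity expressing each Taylor coefficient of $(f')^2 - f f''$ as a manifestly nonneg combination of the basic log-concavity differences.

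For smoothness, nonnegativity and log-concavity of $(a_n)$ force the ratios $a_{n+1}/a_n$ to be a decreasing sequence on the support of $(a_n)$, so $a_n \leq a_0 (a_1/a_0)^n$. Hence $\sum a_n t^n/n!$ converges absolutely for every $t \in \R$ and $f$ is entire, in particular $C^\infty$. Degenerate cases in which $a_0$ or $a_1$ vanish only shift the support and are handled identically.

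For log-concavity on $[0,\infty)$ I would show the stronger fact that every Taylor coefficient of $(f')^2 - f f''$ at $0$ is nonneg. By Cauchy product,
\[
(f'(t))^2 - f(t) f''(t) \;=\; \sum_{n \geq 0} \frac{t^n}{n!}\, c_n, \qquad c_n \;=\; \sum_{k=0}^n \binom{n}{k}\bigl(a_{k+1} a_{n+1-k} - a_k a_{n+2-k}\bigr),
\]
so it suffices to show $c_n \geq 0$ for every $n \geq 0$. The key is to establish the identity
\[
c_n \;=\; \sum_{i=1}^{\lfloor (n+2)/2 \rfloor} \Bigl(\binom{n}{i-1} - \binom{n}{i-2}\Bigr)\, \delta_{i,\, n+2-i}, \qquad \delta_{i,j} := a_i a_j - a_{i-1} a_{j+1},
\]
which displays $c_n$ as a nonneg sum: the binomial differences are nonneg because $i - 1 \leq n/2$ (by unimodality of $\binom{n}{\cdot}$ around $n/2$), and $\delta_{i,\,n+2-i} \geq 0$ by iterated log-concavity, since $i \leq n+2-i$ makes $a_p a_q \geq a_{p-1} a_{q+1}$ for $p \leq q$.

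The main work is the verification of this identity. Abel summation in $i$ collapses the right-hand side to $\sum_{p=0}^{n+2} \bigl(\binom{n}{p-1} - \binom{n}{p-2}\bigr)\, a_p a_{n+2-p}$, which coincides with $c_n$ once one combines the two contributions from the indices $k = p-1$ and $k = p$ in the defining sum for $c_n$ and symmetrizes in $p \leftrightarrow n+2-p$ via $\binom{n}{k} = \binom{n}{n-k}$. This bookkeeping, while routine, is the one genuinely combinatorial step; small-$n$ sanity checks such as $c_3 = 2\delta_{2,3} + \delta_{1,4}$, $c_4 = 2\delta_{3,3} + 3\delta_{2,4} + \delta_{1,5}$, and $c_5 = 5\delta_{3,4} + 4\delta_{2,5} + \delta_{1,6}$ confirm the formula.
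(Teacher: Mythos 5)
Your proof is correct and follows the paper's overall strategy: prove entirety from the geometric-decay bound, then show that each Taylor coefficient $c_n$ of $(f')^2 - ff''$ at the origin is nonnegative by reducing to $\sum_k\bigl(\binom{n}{k-1}-\binom{n}{k}\bigr)a_k a_{n+2-k}$ and exploiting the symmetry of $a_k a_{n+2-k}$ about $k=(n+2)/2$ together with the unimodality of $\binom{n}{\cdot}$. The only place you diverge is the final step: the paper folds the sum and then invokes the Nayar--Oleszkiewicz lemma (a zero-sum weight sequence with nondecreasing signs paired against a nondecreasing sequence yields a nonnegative total), whereas you skip the lemma and exhibit an explicit positivity witness
\[
c_n \;=\; \sum_{i=1}^{\lfloor (n+2)/2\rfloor}\Bigl(\tbinom{n}{i-1}-\tbinom{n}{i-2}\Bigr)\bigl(a_i a_{n+2-i}-a_{i-1}a_{n+3-i}\bigr),
\]
verified by Abel summation, which is precisely the mechanism that proves the NO lemma anyway. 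I checked the identity in general (using $\binom{n}{M}=\binom{n}{M-2}$ for $n$ even, $M=n/2+1$, and $\binom{n}{M}=\binom{n}{M-1}$ for $n$ odd, $M=(n+1)/2$, to handle the boundary term), and it holds. So this is essentially the same argument made self-contained: you pay with one extra identity to verify and gain an explicit nonnegative decomposition; the paper is a bit shorter because it outsources the Abel-summation step to a cited lemma. Both ultimately rest on exactly two facts: $\binom{n}{i-1}\geq\binom{n}{i-2}$ for $i-1\leq n/2$, and $a_i a_j\geq a_{i-1}a_{j+1}$ for $i\leq j$ in a log-concave sequence.
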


Gurvits obtains this as a by-product of his characterisation of \emph{shifts} preserving log-concavity. He also indicates a very different, short argument (see \cite{Gur, Gur2}): by Shephard's theorem (see \cite{Sh}), if a finite sequence $(a_n)_{n=0}^{n_0}$ is log-concave, then there are simplices $K_1, K_2$ in $\R^N$ such that $p_N(t) = \sum_{n=0}^N \binom{N}{n}a_nt^n$ is of the form $\text{vol}_N(K_1+tK_2)$. By the Brunn-Minkowski inequality, $p_N(t)^{1/N}$ is concave, which gives $(p_N(t)')^2 \geq \frac{N}{N-1}p_N(t)\cdot p_N(t)''$. Fix $s > 0$, let $t = s/N$ with $N \to \infty$ to get $(p'(s))^2 \geq p'(s)p''(s)$ with $p(s) = \sum_{n=0}^{n_0} \frac{a_n}{n!}s^n$, allowing to conclude the result. We present a third, rather elementary proof.

\begin{proof}[Proof of Theorem \ref{thm:Gur}]
Thanks to log-concavity, $(a_n)$ is majorised by a geometric sequence, so the radius of convergence of $\sum \frac{a_n}{n!}t^n$ is $\infty$, hence $f$ is $C^\infty$ on $\R$.

We have $f^{(k)}(t) = \sum_{n \geq 0} \frac{a_{n+k}}{n!} t^n$. The log concavity of $f$ is equivalent to $(f')^2 \geq f'' f$, that is
\[
	\left( \sum_{n \geq 0} \frac{a_{n+1}}{n!} t^n \right)^2 - \left( \sum_{n \geq 0} \frac{a_{n}}{n!} t^n \right) \left( \sum_{n \geq 0} \frac{a_{n+2}}{n!} t^n \right) \geq 0.
\]  
The left hand side is a power series with coefficient in front of $t^n$ equal to
\[
	\sum_{k+l=n} \frac{a_{k+1}}{k!} \cdot \frac{a_{l+1}}{l!} - \sum_{k+l=n} \frac{a_{k}}{k!} \cdot \frac{a_{l+2}}{l!} .
\]
It is enough to show that this expression is non-negative. Multiplying both sides by $n!$, we see that it is enough to prove the inequality
\[
	\sum {n \choose k } a_{k+1} a_{n-k+1} - \sum {n \choose k } a_{k} a_{n-k+2} \geq 0
\]
with the usual convention that ${n \choose k} =0$ if $k<0$ or $k>n$. The sums are over $\Z$. Shifting the summation index in the first sum we get an equivalent form
\[
	\sum \left({n \choose k-1} - {n \choose k} \right) a_k a_{n-k+2} \geq 0.
\]
For $n=0$ this is just $a_1^2 \geq a_0 a_2$ and for $n=1$, it is $2a_1 a_2 \geq a_0 a_3 + a_1 a_2$, which is just $a_1 a_2 \geq a_0 a_3$. By log-concavity, both hold. Thus in what follows we can assume that $n \geq 2$.

Let $b_k = a_k a_{n-k+2}$ (we set $a_k=0$ for $k<0$). Our goal is to prove
\[
	\sum \left({n \choose k-1} - {n \choose k} \right) b_k \geq 0.
\]
Note that $b_k = b_{n-k+2}$. From log concavity we know that the sequence $b_k$ is nondecreasing, $b_{k-1} \leq b_{k}$ for $k \leq \frac{n}{2}+1$.
Since $b_k$ is symmetric about $k=\frac{n}{2}+1$, we can write the desired inequality in an equivalent form
\[
	\left({n \choose \frac{n}{2}} - {n \choose \frac{n}{2}-1} \right) b_{\frac{n}{2}+1} +  \sum_{k < \frac{n}{2}+1} \left(2 {n \choose k-1} - {n \choose k} - {n \choose k-2 } \right) b_k \geq 0
\]
with the first term present only if $n$ is even. To prove this, we set
\[
s_k = 2 {n \choose k-1} - {n \choose k} - {n \choose k-2 }, \qquad k < \frac{n}{2}+1
\]
and additionally when $n$ is even,
\[
s_{n/2+1} = {n \choose \frac{n}{2}} - {n \choose \frac{n}{2}-1}
\]
and use the following elementary lemma (Lemma 6 in \cite{NO}). 
\begin{lemma}[Nayar-Oleszkiewicz, \cite{NO}]\label{lm:NO}
If $b_0 \leq b_1 \leq \ldots \leq b_m$ and real numbers $s_0, s_1, \ldots, s_m$ are such that $\sum_{k=0}^m s_k = 0$ and the sequence $(\sgn(s_k))$ is nondecreasing, then $\sum_{k=0}^m s_k b_k \geq 0$.
\end{lemma}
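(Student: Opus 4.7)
The plan is to exploit the single sign change in $(s_k)$ together with the vanishing-sum hypothesis by the standard trick of shifting $b$ by a constant at the change point. Since $(\sgn(s_k))_{k=0}^m$ is nondecreasing and takes values in $\{-1,0,1\}$, there is a threshold $k^\ast \in \{0,1,\ldots,m\}$ such that $s_k \leq 0$ for all $k < k^\ast$ and $s_k \geq 0$ for all $k \geq k^\ast$. This is the only structural fact about $(s_k)$ that I will use.

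Next, because $\sum_{k=0}^m s_k = 0$, subtracting any constant from all the $b_k$'s does not change $\sum_{k=0}^m s_k b_k$. In particular,
\[
\sum_{k=0}^m s_k b_k = \sum_{k=0}^m s_k (b_k - b_{k^\ast}).
\]
Now I would check termwise nonnegativity: for $k < k^\ast$ one has $s_k \leq 0$ and also $b_k - b_{k^\ast} \leq 0$ by monotonicity of $(b_k)$, so $s_k(b_k - b_{k^\ast}) \geq 0$; for $k \geq k^\ast$ one has $s_k \geq 0$ and $b_k - b_{k^\ast} \geq 0$, so again $s_k(b_k - b_{k^\ast}) \geq 0$. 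Summing yields the claim.

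There is no real obstacle here; the only thing to be careful about is the choice of $k^\ast$ when some $s_k$ vanish (any split point within the zero run works, since those terms contribute $0$ anyway), and the degenerate cases where all $s_k \leq 0$ or all $s_k \geq 0$, in which case $\sum s_k = 0$ forces $s_k \equiv 0$ and the inequality is trivial. The argument is a discrete analogue of the ``integration against a function that changes sign once'' principle used in the proof of Lemma \ref{lm:schur}, which is presumably why this lemma is invoked at exactly this point.
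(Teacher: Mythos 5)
Your proof is correct. The paper does not reproduce a proof of this lemma — it is imported verbatim as Lemma~6 from \cite{NO} — and the argument you give (normalise $\sum s_k b_k = \sum s_k(b_k - b_{k^\ast})$ using $\sum s_k = 0$, then observe termwise nonnegativity on each side of the sign change) is the standard one-line proof and, up to cosmetic details, the one in \cite{NO}. One small remark: the ``degenerate cases'' you flag never actually require separate treatment. Since $(\sgn(s_k))$ is nondecreasing and not all $s_k$ can be strictly of one sign (that would contradict $\sum s_k = 0$), a valid threshold $k^\ast\in\{0,\dots,m\}$ with $s_k\le0$ for $k<k^\ast$ and $s_k\ge0$ for $k\ge k^\ast$ always exists, and the termwise argument then goes through uniformly.
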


The lemma is applied with $m = \lfloor n/2+1 \rfloor$ and the sequence $(s_k)$ defined above (which plainly satisfies $\sum_{k=0}^m s_k = 0$). It remains to check that $(sgn(s_k))$ is a nondecreasing sequence. 
For $n \geq 2$ the inequality $s_k \leq 0$ is equivalent to
\[
	-4k^2 +4kn +8k - n^2 - 3n-2 \leq 0.
\] 
The maximum of this parabola is attained for $k=\frac{n}{2}+1$ and thus for $k \leq \frac{n}{2}+1$ this quadratic function is increasing. This shows that $(\sgn(s_k))$ is non-decreasing, which finishes the proof.
\end{proof}

This gives that the classes of strongly log-concave and ultra sub-Gaussian random variables are the same. As observed earlier in Theorem \ref{thm:type-L-USG}, type $\sL$ random variables are ultra sub-Gaussian. The converse is false: it is enough to consider $X$ with $3$ atoms: $\p{X=0} = 1-\beta$, $\p{X = \pm 1} = \beta/2$. Then $\E\exp(zX) = (1-\beta)+ \beta \cosh(z)$. The equation $(1-\beta)+ \beta \cosh(z)=0$ has only purely imaginary zeroes only if $\beta>1/2$. On the other hand, this random variables is ultra sub-Gaussian for any $\beta \in (0,1)$. Another examples are random variables with densities proportional to $e^{-|x|^\alpha}$: they are ultra sub-Gaussian for every $\alpha \geq 2$ (see Corollary 2 in \cite{NO}), whereas they are of type $\sL$ only for integral $\alpha$ (see Section \ref{sec:examples}). This discussion is summarised in Figure \ref{fig:dom}.

\usetikzlibrary{shapes.geometric, arrows}
\tikzstyle{box} = [rectangle, rounded corners, minimum width=2cm, minimum height=0.5cm,text centered, draw=black]
\tikzstyle{arrow} = [->,>=stealth]

\begin{figure}[ht]

\begin{center}
\begin{tikzpicture}[node distance=2cm]
\node (USG) [box] {USG};
\node (FLC) [box, right of=USG, xshift=2cm] {SLC};
\node (L) [box, below of=USG, yshift=0.5cm, xshift=2cm] {type $\sL$};
\draw [arrow] (USG) -- node {/} (L);
\draw [arrow]  (L.160) -- (USG.-108);
\draw [arrow]  (L.20) -- (FLC.-90);
\draw [arrow] (FLC) -- node {/} (L);
\draw [arrow] (USG.5) -- (FLC.175);
\draw [arrow] (FLC.-175) -- (USG.-5);
\end{tikzpicture}
\end{center}
\caption{Implications between ultra sub-Gaussian (USG), strongly log-concave (SLC) and type $\sL$ random variables.}
\label{fig:dom}

\end{figure}
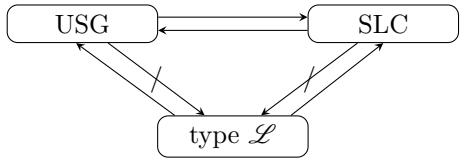

\end{document}